\documentclass[11pt]{amsart}
\usepackage{amssymb}
\usepackage{dsfont}
\usepackage{color}

\newtheorem{theorem}{Theorem}
\newtheorem{lemma}[theorem]{Lemma}

\newtheorem{problem}[theorem]{Problem}
\newtheorem{corollary}[theorem]{Corollary}

\newtheorem{remark}[theorem]{Remark}

\newcommand{\N}{\mathbb N}
\newcommand{\Q}{\mathbb Q}

\newcommand{\R}{\mathbb R}

\newcommand{\C}{\mathbb C}

\newcommand{\on}{\operatorname}
\linespread{1.5}
\author{Taras Banakh}

\address{T.Banakh: Ivan Franko University of Lviv (Ukraine) and Jan Kochanowski Uniwersity in Kielce (Poland)}
\email{t.o.banakh@gmail.com}

\author{Artur Bartoszewicz}

\address{Institute of Mathematics, Technical University of \L\'od\'z, W\'olcza\'nska 215, 93-005
\L\'od\'z, Poland}
\email {arturbar@p.lodz.pl}

\author{Szymon G\l \c ab}

\address{Institute of Mathematics, Technical University of \L\'od\'z, W\'olcza\'nska 215, 93-005
\L\'od\'z, Poland}
\email {szymon.glab@p.lodz.pl}

\author{Emilia Szymonik}

\address{Institute of Mathematics, Technical University of \L\'od\'z, W\'olcza\'nska 215, 93-005
\L\'od\'z, Poland}
\email {szymonikemilka@wp.pl}

\title{Algebraic and topological properties of some sets in $\ell_1$}
\subjclass{Primary: 40A05; Secondary: 15A03}
\keywords{subsums of series, achievement set of sequence, algebrability, strong algebrability, lineability, spaceability}
\date{}

\begin{document}

\begin{abstract}
For a sequence $x \in\ell_1 \setminus c_{00}$, one can consider the set $E(x)$ of all subsums of series $\sum_{n=1}^{\infty} x(n)$. Guthrie and Nymann proved that $E(x)$ is one of the following types of sets:
\begin{itemize}
\item[($\mathcal I$)] a finite union of closed intervals;
\item[($\mathcal C$)] homeomorphic to the Cantor set;
\item[($\mathcal{MC}$)] homeomorphic to the set $T$ of subsums of
$\sum_{n=1}^\infty b(n)$ where $b(2n-1) = 3/4^n$ and $b(2n) = 2/4^n$.
\end{itemize}
By $\mathcal I$, $\mathcal C$ and $\mathcal{MC}$ denote the sets of all sequences $x \in\ell_1 \setminus c_{00}$, such that $E(x)$ has the property ($\mathcal I$), ($\mathcal C$) and ($\mathcal{MC}$), respectively. In this note we show that $\mathcal I$ and $\mathcal C$ are strongly $\mathfrak{c}$-algebrable and $\mathcal{MC}$ is $\mathfrak{c}$-lineable.
We show that $\mathcal C$ is a dense $G_\delta$-set in $\ell_1$ and $\mathcal I$ is a true $\mathcal F_\sigma$-set. Finally  we show that $\mathcal I$ is spaceable while $\mathcal C$ is not spaceable.
\end{abstract}
\maketitle

\section{Introduction}

\subsection{Lineability, algebrability and spaceability}
Having a linear algebra $A$ and its subset $E\subset A$ one can ask if $E\cup\{0\}$ contains a linear subalgebra $A'$ of $A$. Roughly speaking if the answer is positive, then $E$ is algebrable. It is a recent trend in Mathematical Analysis to establish the algebrability of sets $E$ which are far from being linear, that is $x,y\in E$ does not generally imply $x+y\in E$. Such algebrability results were obtained in sequence spaces (see \cite{BG}, \cite{BGP} and \cite{BG1}) and in function spaces (see \cite{ACPSS}, \cite{ASS}, \cite{APSS}, \cite{GPMSS} and \cite{GPPSS}).

Assume that $V$ is a linear space (linear algebra). A subset $E \subset V $ is called lineable (algebrable) whenever $E \cup \{0\}$ contains an infinite-dimensional linear space (infinitely generated linear algebra, respectively), see \cite{AGS}, \cite{LBG} and \cite{GQ}. For a cardinal $\kappa > \omega$, let us observe that the set $E$ is $\kappa$-algebrable (i.e. it contains $\kappa$-generated linear algebra), if and only if it contains an algebra which is a $\kappa$-dimensional linear space (see \cite{BG}). Moreover, we say that a subset $E$ of a commutative linear algebra $V$ is strongly $\kappa$-algebrable (\cite{BG}), if there exists a $\kappa$-generated free algebra $A$ contained in $E \cup \{0\}$.

Note, that $X= \{x_\alpha : \alpha < \kappa \} \subset E$ is a set of free generators of a free algebra $A \subset E$ if and only if the set $X'$ of elements of the form $x_{\alpha_1}^{k_1} x_{\alpha_2}^{k_2}\dots x_{\alpha_n}^{k_n}$ is linearly independent and all linear combinations of elements from $X'$ are in $E \cup \{0 \}$. It is easy to see that free algebras have no divisors of zero.

In practice, to prove $\kappa$-algebrability of set $E \subset V$ we have to find $X \subseteq E$ of cardinality $\kappa$ such that for any polynomial $P$ in $n$ variables and any distinct $x_1,\dots,x_n \in X$ we have either $P(x_1,\dots,x_n) \in E$ or $P(x_1,\dots,x_n)=0$. To prove the strong $\kappa$-algebrability of $E$ we have to find $X \subset E$, $|X|= \kappa$, such that for any non-zero polynomial P and distinct $x_1,\dots,x_n \in X$ we have $P(x_1,\dots,x_n) \in E$.

In general, there are subsets of linear algebras which are algebrable but not strongly algebrable. Let $c_{00}$ be a subset of $c_0$ consisting of all sequences with real terms equal to zero from some place. Then the set $c_{00}$ is algebrable in $c_0$ but is not strongly $1$-algebrable \cite{BG}.

Let $X$ be a Banach space. The subset $M$ of $X$ is spaceable if $M\cup\{0\}$ contains infinitely dimensional closed subspace $Y$ of $X$. Since every infinitely dimensional Banach space contains linearly independent set of the cardinality continuum, the spaceability implies $\mathfrak{c}$-lineability. However, the spaceability is a much stronger property then $\mathfrak{c}$-lineability. The notions of spaceability and $\mathfrak{c}$-algebrability are incomparable.
We will show that even $\mathfrak{c}$-algebrable dense $\mathcal G_\delta$-sets in $\ell_1$ may not be spaceable. On the other hand, there are sets in $c_0$ which are spaceable but not $1$-algebrable (see  \cite{BG}).

\subsection{The subsums of series}

Let $x\in \ell_1$. The set of all subsums of $\sum_{n=1}^\infty x(n)$, meaning the set of sums of all subseries of $\sum_{n=1}^\infty x(n)$, is defined by
$$
E(x)=\{a\in{\R}:\exists A\subset\N\;\;\; \sum_{n\in A}x(n)=a\}.
$$
Some authors call it the achievement set of $x$.
The following theorem is due to Kakeya.

\begin{theorem} \label{kakeya}\cite{K}.
Let $x \in \ell_1$
\item[(1)] If $x\not\in c_{00}$, then $E(x)$ is a perfect compact set.
\item[(2)] If $|x(n)|>\sum_{i>n}|x(i)|$ for almost all $n$, then $E(x)$ is homeomorphic to the Cantor set.
\item[(3)] If $|x(n)|\leq \sum_{i>n}|x(i)|$ for $n$ sufficiently large, then $E(x)$ is a finite union of closed intervals. In the case of non-increasing sequence $x$, the last inequality is also necessary to obtain $E(x)$ being a finite union of intervals.
\end{theorem}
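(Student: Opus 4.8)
The plan is to reduce to strictly positive sequences, set up a ``head plus tail'' splitting, and then treat the three cases. Since replacing $x$ by $|x|$ only translates $E(x)$ and deleting zero entries changes neither $E(x)$ nor membership in $c_{00}$, I may assume $x(n)>0$ for all $n$; put $r_n=\sum_{i>n}x(i)$ and $S=r_0$. The map $\varphi\colon\{0,1\}^{\N}\to\R$, $\varphi(\varepsilon)=\sum_n\varepsilon(n)x(n)$, is continuous because $x\in\ell_1$, and $E(x)=\varphi(\{0,1\}^{\N})$, so $E(x)$ is compact; moreover $E(x)=F_{n-1}+E_n$ for every $n$, where $F_{n-1}=\{\sum_{i<n}\varepsilon(i)x(i):\varepsilon\}$ is finite and $E_n=\{\sum_{i\ge n}\varepsilon(i)x(i):\varepsilon\}\subseteq[0,r_{n-1}]$. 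This yields Part (1): given $a=\sum_{n\in A}x(n)$ and $\delta>0$, since $x\notin c_{00}$ (and $x(n)\to0$) choose $n$ with $0<x(n)<\delta$, outside $A$ when $\N\setminus A$ is infinite and inside $A$ otherwise; then $a+x(n)$, respectively $a-x(n)$, is a point of $E(x)$ within $\delta$ of $a$, so $E(x)$ is perfect.

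For the sufficiency in Part (3) the key lemma is that $x(n)\le r_n$ for all $n$ forces $E(x)=[0,S]$: given $a\in[0,S]$, run the greedy procedure $s_0=0$, $s_n=s_{n-1}+x(n)$ if $s_{n-1}+x(n)\le a$, and $s_n=s_{n-1}$ otherwise; using $r_{n-1}=x(n)+r_n$ and the hypothesis, an easy induction gives the invariant $0\le a-s_n\le r_n$, whence $s_n\to a$ and $a\in E(x)$. Applying this to the tail sequence when $x(n)\le r_n$ holds for all $n\ge N$ gives $E_N=[0,r_{N-1}]$, so $E(x)=F_{N-1}+[0,r_{N-1}]$ is a finite union of closed intervals.

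For Part (2) assume $x(n)>r_n$ for all $n\ge N$; then $x(n)>0$ for all large $n$, so $x\notin c_{00}$ and $E(x)$ is perfect by Part (1). The map $\varepsilon\mapsto\sum_{i\ge N}\varepsilon(i)x(i)$ from $\{0,1\}^{\{N,N+1,\dots\}}$ onto $E_N$ is a continuous bijection (for injectivity, if $m\ge N$ is the first coordinate where $\varepsilon$ and $\delta$ differ, say $\varepsilon(m)=1$, the difference of the images is $\ge x(m)-r_m>0$), hence a homeomorphism, so $E_N$ is homeomorphic to $\{0,1\}^{\N}$; in particular $E_N$ is compact and nowhere dense in $\R$. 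Then $E(x)=F_{N-1}+E_N$ is a finite union of translates of a compact nowhere dense set, hence itself compact and nowhere dense in $\R$, and therefore zero-dimensional (the dense open complement of $E(x)$ cuts it into arbitrarily small relatively clopen pieces). Being compact, metrizable, perfect and zero-dimensional, $E(x)$ is homeomorphic to the Cantor set by Brouwer's characterization.

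For the necessity in Part (3), let $x$ be non-increasing with $x(n)>r_n$ for infinitely many $n$. Since $x$ takes each value only finitely often, among these indices we may choose $m_1<m_2<\cdots$ with the values $x(m_i)$ strictly decreasing; put $m_i'=\max\{k:x(k)=x(m_i)\}$. If $\sum_{n\in A}x(n)<x(m_i)$, then every $n\in A$ has $x(n)<x(m_i)$, hence $n>m_i'$ by monotonicity, so $\sum_{n\in A}x(n)\le r_{m_i'}\le r_{m_i}<x(m_i)$; thus the non-empty open interval $(r_{m_i'},x(m_i))$ is disjoint from $E(x)$. Moreover $x(m_j)\le x(m_{i+1})\le r_{m_i'}$ for $j>i$, so for $i<j$ the gap $(r_{m_i'},x(m_i))$ separates the points $x(m_i)$ and $x(m_j)$ of $E(x)$; hence these points lie in pairwise distinct components, $E(x)$ has infinitely many components, and $E(x)$ is not a finite union of closed intervals. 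I expect the technical core to be the invariant in the greedy lemma and the homeomorphism $E_N\cong\{0,1\}^{\N}$, with the fussiest point being this last step, where one must handle possible coincidences among the values $x(n)$ and make sure the exhibited gaps are genuinely non-degenerate and separate infinitely many points of $E(x)$.
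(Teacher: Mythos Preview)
The paper does not contain a proof of this theorem: it is quoted from Kakeya's 1914 paper \cite{K} and used as a black box throughout. So there is nothing to compare your argument against at the level of strategy.

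That said, your proof is correct and self-contained. A few remarks on points where a reader might pause:
\begin{itemize}
\item The reduction to strictly positive sequences is legitimate, but note that for the necessity clause in (3) you need that a non-increasing $\ell_1$-sequence is automatically non-negative (since $x(n)\to 0$ and $x$ is non-increasing force $x(n)\ge 0$), so taking absolute values does nothing and deleting zeros preserves monotonicity; you use this implicitly.
\item In Part~(2), ``nowhere dense in $\R$'' does imply zero-dimensional, but only because we are in $\R$: a compact nowhere dense subset of $\R$ is totally disconnected (any nondegenerate connected subset of $\R$ contains an interval), hence zero-dimensional. Your parenthetical sketch is fine but this is the actual reason.
\item In the necessity argument, the claim ``$x$ takes each value only finitely often'' relies on $x(n)\to 0$ together with monotonicity and $x(n)>0$; and the non-degeneracy $r_{m_i'}<x(m_i)$ follows from $m_i'\ge m_i$ (so $r_{m_i'}\le r_{m_i}<x(m_i)$). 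These are exactly the ``fussy points'' you flagged, and your treatment of them is sound.
\end{itemize}
So the argument stands; it is a clean modern write-up of Kakeya's result, using the greedy algorithm for~(3), the coding map for~(2), and Brouwer's characterization of the Cantor set.
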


Moreover, Kakeya conjectured that $E(x)$ is either nowhere dense or it is a finite union of intervals. Probably, the first counterexample to this conjecture was given (without a proof) by Weinstein and Shapiro \cite{WS} and, with a correct proof, by Ferens \cite{F}.
Guthrie and Nymann \cite{GN} showed that, for the sequence $b$ given by the formulas $b(2n-1)= \frac{3}{4^n}$ and $b(2n)= \frac{2}{4^n}$, the set $T=E(b)$ is not a finite union of intervals but it has nonempty interior. In the same paper they formulated the following theorem
\begin{theorem}\label{1} \cite{GN}
Let $x\in \ell_1 \setminus c_{00}$, then $E(x)$ is one of the following sets:
\begin{itemize}
\item[(i)] a finite union of closed intervals;
\item[(ii)] homeomorphic to the Cantor set;
\item[(iii)] homeomorphic to the set $T$.
\end{itemize}
\end{theorem}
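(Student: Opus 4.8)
I would prove this trichotomy (due to Guthrie and Nymann) as follows.

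\emph{Reduction.} Rearranging the terms of a sequence does not change its achievement set, and, decomposing $x$ into its nonnegative and nonpositive parts and using that $E(y)$ is symmetric about its midpoint for every nonnegative summable $y$, one sees that $E(x)$ is a translate of $E(z)$, where $z$ lists the absolute values of the nonzero terms of $x$ in non-increasing order. Since $x\notin c_{00}$, the sequence $z$ is positive with infinitely many terms, so by Theorem~\ref{kakeya}(1) $E(z)$ is compact and perfect. Hence we may assume $x(1)\ge x(2)\ge\cdots>0$; put $S=\sum_{n\ge1}x(n)$, $r_N=\sum_{n>N}x(n)$, and $x_{>N}=(x(n))_{n>N}$.

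\emph{Cases (i) and (ii).} If $x(n)\le r_n$ for all large $n$, then $E(x)$ is a finite union of closed intervals by Theorem~\ref{kakeya}(3) --- case (i). If $x(n)>r_n$ for all large $n$, then $E(x)$ is a finite union of translates of a Cantor set by Theorem~\ref{kakeya}(2); such a union is compact, perfect and totally disconnected --- the last point because an interval contained in a finite union of nowhere dense closed sets contradicts the Baire theorem --- hence again a Cantor set, case (ii). In the remaining situation $\{n:x(n)\le r_n\}$ and $\{n:x(n)>r_n\}$ are both infinite; then $E(x)$ is not a finite union of intervals by the necessity part of Theorem~\ref{kakeya}(3), and if $E(x)$ has empty interior it is a compact nowhere dense --- hence totally disconnected --- perfect set, again a Cantor set. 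So everything reduces to one claim: \emph{if $\{n:x(n)\le r_n\}$ and $\{n:x(n)>r_n\}$ are both infinite and $E(x)$ has non-empty interior, then $E(x)$ is homeomorphic to $T$.}

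\emph{The main case.} I would show that $E(x)$ is a Cantorval, i.e. a non-empty compact $K\subseteq\R$ which equals the closure of its interior, has infinitely many complementary intervals, and is such that each endpoint of $K$ and each endpoint of a complementary interval is non-interior to $K$ and is a limit of complementary intervals from the side on which $K$ persists; then I would invoke the known fact that all Cantorvals are homeomorphic (to $T$ in particular). First, $E(x_{>N})$ has non-empty interior for every $N$ --- otherwise $E(x)=\bigcup_{F\subseteq\{1,\dots,N\}}(\sum_{i\in F}x(i)+E(x_{>N}))$ would be nowhere dense. Hence, given $a=\sum_{i\in A}x(i)$ in $E(x)$ and $\varepsilon>0$, one picks $N$ with $r_N<\varepsilon/2$, deletes from $A$ the indices exceeding $N$, and adds to the resulting sum a subinterval of $E(x_{>N})$, producing an interval inside $E(x)$ within $\varepsilon$ of $a$; so $\operatorname{int}E(x)$ is dense in $E(x)$. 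That $E(x)$ has infinitely many complementary intervals is immediate from compactness and the fact that it is not a finite union of intervals. For their accumulation: the tail $x_{>N}$ again has $x(n)>r_n$ infinitely often, so $E(x_{>N})$ has a complementary interval; choosing $N$ with $x(N)>r_N$ gives $E(x_{>N})\subseteq[0,r_N]\subseteq[0,x(N))$, and since no subsum using an index $\le N$ is below $x(N)$, that complementary interval is also one of $E(x)$ --- letting such $N\to\infty$ (so $r_N\to0$) and using the midpoint symmetry of $E(x)$, complementary intervals of $E(x)$ accumulate at $0$ and at $S$. At an endpoint $\alpha$ of a complementary interval one fixes $N$ with $r_N$ below its length, notes that near $\alpha$ the set $E(x)$ is a finite union of translates of $E(x_{>N})$, and repeats the tail argument inside the translate containing $\alpha$ to get complementary intervals of $E(x)$ accumulating at $\alpha$ from the appropriate side and to conclude $\alpha\notin\operatorname{int}E(x)$.

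\emph{Main obstacle.} The hard step is this last one: verifying carefully that the block/tail description of $E(x)$ reproduces the Cantorval structure at every scale and location (the combinatorial core of the Guthrie--Nymann argument), together with the topological input that all Cantorvals are homeomorphic --- which one either cites or proves by building the homeomorphism $E(x)\to T$ from the order isomorphism between the two families of complementary intervals and extending it over the residual Cantor set of their endpoints.
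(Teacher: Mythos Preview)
The paper does not prove this theorem: it is quoted from Guthrie and Nymann \cite{GN}, with the remark that a correct proof was later supplied by Nymann and S\'aenz \cite{NS}. So there is no proof in the paper to compare your attempt against.

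Evaluated on its own, your outline is essentially the Nymann--S\'aenz route: reduce to a positive non-increasing sequence, dispose of the two ``eventually'' cases via Theorem~\ref{kakeya}, and in the mixed case show that $E(x)$ is either nowhere dense (hence a Cantor set) or a Cantorval. Your arguments that $E(x)=\overline{\operatorname{int}E(x)}$ and that $E(x)$ has infinitely many gaps are clean. The step that needs the most work is exactly the one you flag as the main obstacle: showing that gaps accumulate at \emph{every} gap endpoint $\alpha$. Your sketch (``repeat the tail argument inside the translate containing $\alpha$'') is on the right track, but to make it rigorous you must argue that, once $r_N$ is smaller than the adjacent gap, the only $s_F=\sum_{i\in F}x(i)$ with $F\subseteq\{1,\dots,N\}$ lying in $[\alpha-r_N,\alpha]$ is $s_F=\alpha$ itself (because $s_F\in E(x)$ and the gap excludes $(\gamma,\alpha)$), and then that for $\delta$ below the distance from $\alpha$ to the next $s_F$ one has $E(x)\cap[\alpha,\alpha+\delta]=(\alpha+E(x_{>N}))\cap[\alpha,\alpha+\delta]$. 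Only then does a gap of $E(x_{>N})$ near $0$ become a gap of $E(x)$ near $\alpha$; without isolating a single translate, other translates could in principle fill in the putative gaps. With that refinement (and an appeal to the fact that any two Cantorvals are homeomorphic), your argument goes through.
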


A correct proof of the Guthrie and Nymann trichotomy was given by Nymann and S\'aenz \cite{NS}. The sets homeomorphic to $T$ are called Cantorvals (more precisely: M-Cantorvals). Note that Theorem \ref{1} can be formulated as follows:
The space $\ell_1$ is a disjoint union of the sets $c_{00},\mathcal I,\mathcal C$ and $\mathcal{MC}$ where $\mathcal I$ consists of sequences $x$ with $E(x)$ equal to a finite union of intervals, $\mathcal C$ consists of sequences $x$ with $E(x)$ homeomorphic to the Cantor set, and $\mathcal{MC}$ of $x$ with $E(x)$ being an M-Cantorval.
\\For $x \in \ell_1$, let $x'$ be an arbitrary finite modification of $x$, and let $|x|$ denote the sequence $y \in \ell_1$ such that $y(n)=|x(n)|$. Then $x \in \mathcal I \iff |x| \in \mathcal{I} \iff x' \in \mathcal{I}$. The same equivalences hold for sets $\mathcal C$ and $\mathcal{MC}$.

\section{Algebraic substructures in $\mathcal C$, $\mathcal I$ and $\mathcal{MC}$.}

Jones in a very nice paper \cite{J} gives the following example. Let $x(n)=1/2^n$ and $y(n)=1/3^n$. Then clearly $x\in\mathcal I$ and $y\in\mathcal C$. Moreover, $x+y\in\mathcal C$ and $x-y\in\mathcal I$. Since $x=(x+y)-y$ and $y=-(x-y)+x$, then neither $\mathcal I$ nor $\mathcal C$ is closed under pointwise addition.
However, in the present paper we show that the sets $\mathcal C$, $\mathcal I$ and $\mathcal{MC}$ contain large ($\mathfrak{c}$-generated) algebraic structures. To prove the strong $\mathfrak{c}$-algebrability of $\mathcal C$ and $\mathcal I$, we will combine Theorem \ref{kakeya} and the method of linearly independent exponents, which was successful in \cite{BGP} and \cite{BG}. In the next theorem we construct generators as the powers of one geometric series $x_q$ ($x_q(n)=q^n$) for $0<q< \frac{1}{2}$. Clearly, by Theorem \ref{kakeya}, $x_q \in \mathcal C$.

\begin{theorem}
$\mathcal C$ is strongly $\mathfrak{c}$-algebrable.
\end{theorem}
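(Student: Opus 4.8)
The plan is to take the free generators to be pointwise powers of a continuum-sized family of geometric sequences $x_{q_\alpha}$, $0<q_\alpha<\tfrac12$, and to extract from a single application of Theorem~\ref{kakeya}(2) both the linear independence needed for freeness and the inclusion of the whole generated algebra into $\mathcal C$. Concretely, I would first fix $\{q_\alpha:\alpha<\mathfrak c\}\subset(0,\tfrac12)$ whose logarithms $\{\ln q_\alpha:\alpha<\mathfrak c\}$ are linearly independent over $\mathbb Q$; such a family exists, since one may take the $q_\alpha$ algebraically independent over $\mathbb Q$, and then a nontrivial relation $\sum_i m_i\ln q_{\alpha_i}=0$ with $m_i\in\mathbb Z$ would express an equality between two distinct monomials in the $q_{\alpha_i}$, which is impossible. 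Put $x_\alpha:=x_{q_\alpha}$. In the algebra $\ell_1$ with pointwise multiplication one has $x_{\alpha_1}^{k_1}\cdots x_{\alpha_n}^{k_n}=x_r$ with $r=q_{\alpha_1}^{k_1}\cdots q_{\alpha_n}^{k_n}$, so the $\mathbb Q$-independence of the $\ln q_\alpha$ guarantees that distinct monomials give distinct ratios, while $k_i\ge1$ and $q_{\alpha_i}<\tfrac12$ force every such ratio to satisfy $r\le\max_i q_{\alpha_i}<\tfrac12$.

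By the description of free generators recalled in the introduction, it then suffices to prove, for any finite set of pairwise distinct reals $r_1,\dots,r_m\in(0,\tfrac12)$: (a) the geometric sequences $x_{r_1},\dots,x_{r_m}$ are linearly independent in $\ell_1$; and (b) every nonzero combination $z=\sum_{j=1}^m c_j x_{r_j}$ (all $c_j\ne0$) lies in $\mathcal C$. Part (a) is the classical linear independence of distinct geometric sequences (a Vandermonde / generating-function argument). For (b), reorder so that $r_1>r_2>\dots>r_m$; then $z(n)=c_1r_1^n\bigl(1+o(1)\bigr)$, so in particular $z\in\ell_1\setminus c_{00}$. Combining $|z(n)|\ge|c_1|r_1^n-\sum_{j\ge2}|c_j|r_j^n$ with the tail estimate $\sum_{i>n}|z(i)|\le\sum_{j=1}^m|c_j|\,r_j^{n+1}/(1-r_j)$, divide both quantities by $r_1^n$ and let $n\to\infty$: the first tends to $|c_1|$ and the second to $|c_1|\,r_1/(1-r_1)$. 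Since $r_1<\tfrac12$ gives $r_1/(1-r_1)<1$, it follows that $|z(n)|>\sum_{i>n}|z(i)|$ for all sufficiently large $n$, and Theorem~\ref{kakeya}(2) yields $z\in\mathcal C$.

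I expect the only point requiring care to be the construction of the family $\{q_\alpha\}$: it has to be multiplicatively independent enough that distinct monomials are distinct geometric sequences (so that $X'$ is linearly independent and the algebra is free), yet with all ratios staying strictly below $\tfrac12$ so that Kakeya's Cantor criterion applies simultaneously to every nonzero element of the algebra. Once this family is fixed, both clauses of strong $\mathfrak c$-algebrability follow from the single asymptotic estimate above, and no constant-term obstruction arises, since $\ell_1$ is non-unital and the relevant polynomials have zero constant term.
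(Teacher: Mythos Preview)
Your proof is correct and follows essentially the same strategy as the paper's. The only difference is a cosmetic reparametrization: the paper fixes a single $q\in(0,\tfrac12)$ and takes generators $x_\alpha(n)=q^{r_\alpha n}$ for a $\mathbb Q$-linearly independent family $\{r_\alpha\}\subset(1,\infty)$, whereas you take generators $x_{q_\alpha}(n)=q_\alpha^n$ for a multiplicatively independent family $\{q_\alpha\}\subset(0,\tfrac12)$; under the substitution $q_\alpha=q^{r_\alpha}$ these coincide, and the asymptotic tail estimate invoking Kakeya's criterion is identical in both.
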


\begin{proof}

Fix $q \in (0,1/2)$. Let $\{r_\alpha:\alpha<\mathfrak{c}\}$ be a linearly independent (over the field of all rationals $\Q$) set of reals greater than $1$. Let $x_\alpha(n)=q^{r_\alpha n}$. We will show that the set $\{x_\alpha:\alpha<\mathfrak{c}\}$ generates a free algebra $\mathcal A$ which, except for the null sequence, is contained in $\mathcal{C}$.

To do this, we will show that for any $\beta_1,\beta_2,\dots,\beta_m\in\R \setminus \{0 \}$, any matrix $[k_{il}]_{i\leq m,l\leq j}$ of natural numbers with nonzero distinct rows, and any $\alpha_1<\alpha_2<\dots<\alpha_j<\mathfrak{c}$, the sequence $x$ given by
$$x(n)=P(x_{\alpha_1},\dots,x_{\alpha_j})(n)$$ where
$$P(z_1, \dots ,z_j)=\beta_1 z_1^{k_{11}} z_2^{k_{12}} \dots z_j^{k_{1j}}+ \dots +\beta_m z_1^{k_{m1}} z_2^{k_{m2}} \dots z_j^{k_{mj}}$$
is in $\mathcal C$. In other words,
$$
x(n)=\beta_1q^{n(r_{\alpha_1}k_{11}+ \dots +r_{\alpha_j}k_{1j})}+ \dots +
\beta_mq^{n(r_{\alpha_1}k_{m1}+ \dots +r_{\alpha_j}k_{mj})}
$$

Since $r_{\alpha_1}, \dots ,r_{\alpha_j}$ are linearly independent and the rows of $[k_{il}]_{i \leqslant m, l \leqslant j}$ are distinct, the numbers $r_1:=r_{\alpha_1}k_{11}+ \dots +r_{\alpha_j}k_{1j}, \dots ,r_m:=r_{\alpha_1}k_{m1}+ \dots +r_{\alpha_j}k_{mj}$ are distinct. We may assume that $r_1< \dots <r_m$. Then
$$
\frac{|x(n)|}{\sum_{i>n}|x(i)|}=\frac{|\beta_1q^{nr_1}+ \dots +
\beta_mq^{nr_m}|}{\sum_{i>n}|\beta_1q^{ir_1}+ \dots +
\beta_mq^{ir_m}|}$$
$$\geq\frac{|\beta_1q^{nr_1}+ \dots +
\beta_mq^{nr_m}|}{\sum_{i>n}(|\beta_1|q^{ir_1}+ \dots +
|\beta_m|q^{ir_m})} =\frac{|\beta_1q^{nr_1}+ \dots +
\beta_mq^{nr_m}|}{\frac{|\beta_1|q^{(n+1)r_1}}{1-q^{r_1}}+ \dots +
\frac{|\beta_m|q^{(n+1)r_m}}{1-q^{r_m}}}$$
$$\to\frac{1-q^{r_1}}{q^{r_1}}>1.
$$
Therefore there is $n_0$, such that $|x(n)|>\sum_{i>n}|x(i)|$ for all $n\geq n_0$. Hence, by Theorem \ref{kakeya}, we obtain that $x\in\mathcal{C}$.

\end{proof}

It is obvious that the geometric sequence $x_q$, even for $q> \frac{1}{2}$, is not useful to construct the generators of linear algebra contained in $\mathcal{I}$. Indeed, for sufficiently large exponent $k$, the sequence $x_q^k$ belongs to $\mathcal{C}$. So, in the next theorem we use the harmonic series.

\begin{theorem}

$\mathcal I$ is strongly $\mathfrak{c}$-algebrable.

\end{theorem}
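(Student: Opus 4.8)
The plan is to repeat the scheme of the previous theorem with the geometric generators replaced by ``generalized harmonic'' ones. Fix a set $\{r_\alpha:\alpha<\mathfrak{c}\}\subset(1,\infty)$ that is linearly independent over $\mathbb{Q}$ and put $x_\alpha(n)=n^{-r_\alpha}$; since $r_\alpha>1$, each $x_\alpha$ lies in $\ell_1\setminus c_{00}$. I will show that $\{x_\alpha:\alpha<\mathfrak{c}\}$ freely generates an algebra that, apart from $0$, is contained in $\mathcal{I}$. Exactly as in the proof for $\mathcal{C}$, it suffices to check that for nonzero reals $\beta_1,\dots,\beta_m$, a matrix $[k_{il}]$ of natural numbers with pairwise distinct nonzero rows, and $\alpha_1<\dots<\alpha_j<\mathfrak{c}$, the sequence
$$
x(n)=\beta_1n^{-s_1}+\dots+\beta_mn^{-s_m},\qquad s_i:=k_{i1}r_{\alpha_1}+\dots+k_{ij}r_{\alpha_j},
$$
belongs to $\mathcal{I}$. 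Since the $r_{\alpha_l}$ are $\mathbb{Q}$-linearly independent and the rows of $[k_{il}]$ are distinct, the exponents $s_1,\dots,s_m$ are pairwise distinct, and since every row is nonzero, each $s_i>1$; in particular $x\in\ell_1$. We may assume $1<s_1<\dots<s_m$.

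Two points then remain. First, freeness: the power functions $n\mapsto n^{-s_1},\dots,n\mapsto n^{-s_m}$ with distinct exponents are linearly independent over $\mathbb{R}$ (multiply a vanishing combination by $n^{s_1}$ and let $n\to\infty$ to kill the first coefficient, then iterate), so distinct monomials in the $x_\alpha$ yield linearly independent sequences; in particular $x\neq 0$, and moreover $x(n)=n^{-s_1}\bigl(\beta_1+o(1)\bigr)\neq 0$ for large $n$, so $x\notin c_{00}$. Second -- and this is where the harmonic choice is used -- I will estimate the Kakeya ratio. From $x(i)=i^{-s_1}\bigl(\beta_1+o(1)\bigr)$ we get $|x(i)|\geq\frac{1}{2}|\beta_1|\,i^{-s_1}$ for all large $i$, hence, comparing the sum with an integral,
$$
\sum_{i>n}|x(i)|\;\geq\;\frac{|\beta_1|}{2}\sum_{i>n}i^{-s_1}\;\geq\;\frac{|\beta_1|}{2(s_1-1)}\,(n+1)^{1-s_1},
$$
while $|x(n)|\leq(|\beta_1|+\dots+|\beta_m|)\,n^{-s_1}$. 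Dividing, the ratio $|x(n)|/\sum_{i>n}|x(i)|$ is $O\bigl(n^{-s_1}(n+1)^{s_1-1}\bigr)=O(1/n)\to 0$. Hence $|x(n)|\leq\sum_{i>n}|x(i)|$ for all sufficiently large $n$, and Theorem \ref{kakeya}(3) gives $x\in\mathcal{I}$.

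Putting the two points together, every nonzero polynomial without constant term evaluated at distinct generators produces an element of $\mathcal{I}$, in particular a nonzero sequence, so the generators satisfy no nontrivial algebraic relation and the algebra they generate lies in $\mathcal{I}\cup\{0\}$. This is precisely the strong $\mathfrak{c}$-algebrability of $\mathcal{I}$.

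I expect the one genuinely delicate step to be the lower bound for the tail $\sum_{i>n}|x(i)|$: a priori the terms could cancel, but factoring out the dominant power $i^{-s_1}$ shows that $|x(i)|$ is comparable to $i^{-s_1}$ for large $i$, and since $\sum i^{-s_1}$ has a heavy tail (unlike a geometric series, whose tail is comparable to its last term) the ratio is forced down to $0$. Everything else is a routine transcription of the argument for $\mathcal{C}$.
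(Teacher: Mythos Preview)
Your proof is correct and follows essentially the same route as the paper's: the generators are the generalized harmonic sequences $x_\alpha(n)=n^{-r_\alpha}$ with $r_\alpha>1$ rationally independent, and the key point is that any nontrivial polynomial in these generators has the form $x(n)=\sum_i\beta_i n^{-s_i}$ with distinct $s_i>1$, so the Kakeya ratio $|x(n)|/\sum_{i>n}|x(i)|$ tends to $0$. Your lower bound for the tail via the asymptotic $|x(i)|\sim|\beta_1|\,i^{-s_1}$ is a slightly cleaner packaging of the same estimate the paper obtains by writing $|x(k)|\geq|\beta_1|k^{-p_1}-\sum_{l\geq 2}|\beta_l|k^{-p_l}$ and comparing each piece with an integral; and you make the freeness of the generators explicit, which the paper leaves implicit.
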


\begin{proof}

Let $K$ be a linearly independent subset of $(1,\infty)$ of cardinality $\mathfrak{c}$. For $\alpha \in K$, let $x_{\alpha}$ be a sequence given by the formula   $x_{\alpha}(n)=\frac{1}{n^{\alpha}}$.
 We will show that the set $\{x_\alpha:\alpha \in K\}$ generates a free algebra $\mathcal A$ which is contained in $\mathcal{I} \cup \{0\}$.
To do this, we will show that for any $\beta_1,\beta_2, \dots ,\beta_m\in\R \setminus \{0\}$, any matrix $[k_{il}]_{i\leq m,l\leq j}$ of natural numbers with nonzero distinct rows, and any $\alpha_1<\alpha_2< \dots <\alpha_j$, the sequence $x$ defined by
$$
x=P(x_{\alpha_1},x_{\alpha_2}, \dots ,x_{\alpha_j})
$$
$$
=\beta_1 x_{\alpha_1}^{k_{11}} x_{\alpha_2}^{k_{12}} \dots  x_{\alpha_j}^{k_{1j}} +\beta_2 x_{\alpha_1}^{k_{21}} x_{\alpha_2}^{k_{22}} \dots  x_{\alpha_j}^{k_{2j}} + \dots +\beta_m x_{\alpha_1}^{k_{m1}} x_{\alpha_2}^{k_{m2}} \dots x_{\alpha_j}^{k_{mj}}
$$
belongs to $\mathcal{I}$. We have
$$
x(n)=P(x_{\alpha_1}, x_{\alpha_2}, \dots , x_{\alpha_j})(n)
$$
$$
=\beta_1 \frac{1}{n^{\alpha_1 k_{11}+ \alpha_2 k_{12}+ \dots + \alpha_j k_{1j}}} + \dots +\beta_m \frac{1}{n^{\alpha_1 k_{m1}+ \alpha_2 k_{m2}+ \dots +\alpha_j k_{mj}}}
$$
$$
=\beta_1 \frac{1}{n^{p_1}} + \beta_2 \frac{1}{n^{p_2}} + \dots +\beta_j \frac{1}{n^{p_m}}
$$
Note that $p_1, \dots ,p_m$ are distinct. Assume that $p_1 < p_2 < \dots < p_m$. We have
$$
\frac{|x(n)|}{\sum_{k>n}|x(k)|} = \frac{|\beta_1 \frac{1}{n^{p_1}} +\beta_2 \frac{1}{n^{p_2}} + \dots + \beta_m \frac{1}{n^{p_m}}|}{\sum_{k>n}|\beta_1 \frac{1}{k^{p_1}} +\beta_2 \frac{1}{k^{p_2}} + \dots +\beta_m \frac{1}{k^{p_m}}|}
$$
$$
\leqslant \frac{|\beta_1 \frac{1}{n^{p_1}} +\beta_2 \frac{1}{n^{p_2}} + \dots +\beta_m \frac{1}{n^{p_m}}|}{\sum_{k>n}\left(|\beta_1 \frac{1}{k^{p_1}}| -|\beta_2 \frac{1}{k^{p_2}}| - \dots -|\beta_m \frac{1}{k^{p_m}}|\right)}
$$
$$ \leqslant \frac{|\beta_1 \frac{1}{n^{p_1}} +\beta_2 \frac{1}{n^{p_2}} + \dots +\beta_m \frac{1}{n^{p_m}}|}{|\beta_1|\int_{n+1}^{\infty} \frac{1}{x^{p_1}} dx -|\beta_2|\int_{n}^{\infty} \frac{1}{x^{p_2}} dx - \dots -|\beta_m| \int_{n}^{\infty} \frac{1}{x^{p_m}} dx} $$
$$=\frac{|\beta_1 +\beta_2 \frac{n^{p_1}}{n^{p_2}} + \dots +\beta_m \frac{n^{p_1}}{n^{p_m}}|}{n[|\beta_1| \frac{1}{p_1-1} \frac{n^{p_1-1}}{({n+1})^{{p_1}-1}} -|\beta_2| \frac{1}{p_2-1} \frac{n^{p_1-1}}{({n})^{{p_2}-1}}  - \dots -|\beta_m| \frac{1}{p_m-1} \frac{n^{p_1-1}}{({n})^{{p_m}-1}}]}$$ $$\mathop{\longrightarrow}_{n \to \infty} 0 <1.
$$
Observe that the first inequality holds for $n$ large enough.
Therefore there is $n_0$ such that $|x(n)|\leq\sum_{i>n}|x(i)|$ for any $n\geq n_0$. Hence, by Theorem \ref{kakeya} we obtain that $x\in\mathcal{I}$.

\end{proof}

The method described in the next lemma belongs to the mathematical folklore and was used to construct sequences $x$'s with $E(x)$ being Cantorvals. We present its proof since we did not find it explicitly formulated in the mathematical literature.

\begin{lemma}\label{lemmaMC}
Let $x \in \ell_1$ be such that
\begin{itemize}
\item[(i)] $E(x)$ contains an interval;
\item[(ii)] $|x(n)|> \sum_{i>n}|x(i)|$ for infinitely many $n$;
\item[(iii)] $|x_n| \geqslant |x_{n+1}|$ for almost all $n$.
\end{itemize}
Then $x \in \mathcal{MC}$.
\end{lemma}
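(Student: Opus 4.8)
The plan is to apply the Guthrie--Nymann trichotomy (Theorem~\ref{1}) and to eliminate the two alternatives to ``M-Cantorval''. First I would check that $x$ lies in the scope of Theorem~\ref{1}: by (i) the set $E(x)$ contains a non-degenerate interval, hence is uncountable, so $x\notin c_{00}$ (when $x\in c_{00}$ the set $E(x)$ is finite). Thus $x\in\ell_1\setminus c_{00}$, and $E(x)$ is either a finite union of closed intervals, or homeomorphic to the Cantor set, or homeomorphic to $T$; it suffices to exclude the first two possibilities.

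The Cantor-set alternative is ruled out immediately by (i): a space homeomorphic to the Cantor set is totally disconnected, hence contains no interval, whereas $E(x)$ does.

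To exclude a finite union of intervals I would first normalise $x$. Replacing $x$ by $|x|$ only translates $E(x)$ --- indeed $E(x)=E(|x|)+\sum_{n:\,x(n)<0}x(n)$, the series being absolutely convergent --- so this changes neither the topological type of $E(x)$ nor the hypotheses (i)--(iii). Since $E(y)$ depends only on the multiset $\{y(n):n\in\N\}$, I may then discard the zero terms of $|x|$ and pass to the non-increasing rearrangement $\hat x$ of the result without altering $E$; now $\hat x$ is non-increasing and (i) still holds. Granting for the moment that (ii) is inherited by $\hat x$, suppose toward a contradiction that $E(\hat x)$ is a finite union of closed intervals. Since $\hat x$ is non-increasing, the necessity clause in Theorem~\ref{kakeya}(3) gives $\hat x(n)\le\sum_{i>n}\hat x(i)$ for all sufficiently large $n$, which contradicts (ii) for $\hat x$. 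Hence $E(x)$ is not a finite union of intervals, and by the trichotomy $E(x)$ is homeomorphic to $T$, i.e.\ $x\in\mathcal{MC}$.

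The one genuine obstacle is the claim that (ii) passes to $\hat x$. Write $y$ for $|x|$ with its zero terms removed, and fix $N$ with $y(n)\ge y(n+1)$ for all $n\ge N$. If $n\ge N$ satisfies $y(n)>\sum_{i>n}y(i)$, then $y(n)>y(n+1)$, and by monotonicity of the tail $y(n)>y(i)$ for every $i>n$; thus in the tail of $y$ the value $y(n)$ strictly dominates everything following it. Since $y\in\ell_1$ we have $y(n)\to0$, so for all large $n$ the finitely many head terms $y(1),\dots,y(N-1)$ are each strictly larger than $y(n)$; hence, when $y$ is sorted, some position $m=m(n)$ (with $m(n)\to\infty$) holds the value $y(n)$ and its successors in $\hat x$ are exactly the tail values $\{y(i):i>n\}$, so that $\hat x(m)=y(n)>\sum_{i>n}y(i)=\sum_{i>m}\hat x(i)$. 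Ranging over the infinitely many such $n$ yields infinitely many fast points of $\hat x$, which is (ii) for $\hat x$. I expect this bookkeeping to be the part that must be spelled out carefully; the rest is a direct appeal to Theorems~\ref{kakeya} and~\ref{1}.
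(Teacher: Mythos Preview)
Your proposal is correct and follows exactly the paper's strategy: rule out $\mathcal C$ by (i) and rule out $\mathcal I$ by (ii)--(iii) via the necessity clause of Theorem~\ref{kakeya}(3), then conclude with the trichotomy of Theorem~\ref{1}. The full non-increasing rearrangement and the accompanying bookkeeping are unnecessary, though: by (iii) the sequence $|x|$ is already non-increasing from some index on, so (using the invariance of $\mathcal I$ under $x\mapsto|x|$ and under finite modifications noted just before the lemma) you may simply pass to a tail of $|x|$, where (ii) still fails for infinitely many $n$ and Kakeya's necessity applies directly --- this is why the paper's own proof is a single line.
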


\begin{proof}
By (ii)-(iii), the point $x$ does not belong to $\mathcal{I}$. By (i), the point $x$ does not belong to $\mathcal{C}$. Hence, by Theorem \ref{1} we get $x \in \mathcal{MC}$.
\end{proof}

Up to last years, there were only known a few examples of sequences belonging to $\mathcal{MC}$. These examples were not very useful to construct a large number of linearly independent sequences. Recently, Jones in \cite{J} has constructed a one-parameter family of sequences in $\mathcal{MC}$. We shall use some modification of the example given by Jones in the proof of our next theorem.

\begin{theorem}

$\mathcal{MC}$ is $\mathfrak{c}$-lineable.

\end{theorem}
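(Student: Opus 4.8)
The plan is to apply Lemma~\ref{lemmaMC}: it suffices to produce an $\R$-linear subspace $V\subseteq\ell_1$ with $\dim_{\R}V=\mathfrak{c}$ such that every $x\in V\setminus\{0\}$ satisfies conditions (i), (ii), (iii) of that lemma (condition (i) alone forces $x\notin c_{00}$, so the lemma does apply). I would build $V$ from a continuum-parameter family of ``multigeometric'' sequences modelled on the example of Jones~\cite{J}. Fix a small $\delta>0$; for each $q$ in the interval $I=[\tfrac14,\tfrac14+\delta)$ let $x_q\in\ell_1$ have $k$-th block $(3q^{k},2q^{k})$, that is, $x_q(2k-1)=3q^{k}$ and $x_q(2k)=2q^{k}$. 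For $q=\tfrac14$ this is exactly the Guthrie--Nymann sequence $b$, so $x_{1/4}\in\mathcal{MC}$ by \cite{GN}. Let $V$ be the $\R$-linear span of $\{x_q:q\in I\}$. The structural point is that the block pattern is preserved by linear combinations: if $x=\sum_{l=1}^{n}c_{l}x_{q_{l}}$ with distinct $q_l\in I$, then the $k$-th block of $x$ is $(3G_k,2G_k)$ with $G_k=\sum_{l}c_{l}q_{l}^{k}$, whence $E(x)=\{\sum_{k\ge1}d_kG_k:d_k\in\{0,2,3,5\}\}$, irrespective of the signs of the $G_k$.

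Next I would settle independence and conditions (ii), (iii). Let $x\in V\setminus\{0\}$; let $q_0$ be the largest $q_l$ with $c_l\ne0$ and $c_0$ the corresponding coefficient. The dominant-term (Vandermonde) estimate yields $G_k=c_0q_0^{k}(1+\rho_k)$ with $\rho_k\to0$ geometrically; hence $G_k\ne0$ for all large $k$, which at once proves linear independence of the $x_q$ (so $\dim_{\R}V=\mathfrak{c}$), shows $x\notin c_{00}$, and gives that $G_k$ has eventually constant sign. Replacing $x$ by $-x$ if necessary (this preserves (i)--(iii) and membership in $\mathcal{MC}$), assume $G_k>0$ for all large $k$. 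Since $G_{k+1}/G_k\to q_0<\tfrac12$ we have $2G_k\ge3G_{k+1}$ for all large $k$, which with $3G_k\ge2G_k$ gives condition (iii). For condition (ii), $\sum_{i>2k}|x(i)|=5\sum_{m>k}G_m\sim5G_k\,\tfrac{q_0}{1-q_0}$ as $k\to\infty$, and this is strictly less than $2G_k=|x(2k)|$ for all large $k$ because $q_0\le\tfrac14+\delta<\tfrac27$ once $\delta<\tfrac1{28}$.

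The remaining, and main, obstacle is condition (i): that $E(x)=\{\sum_{k\ge1}d_kG_k:d_k\in\{0,2,3,5\}\}$ contains an interval. Since $E(x)=F_N+T_N$, where $F_N$ is the finite set of subsums of the first $N$ blocks and $T_N=\{\sum_{k>N}d_kG_k:d_k\in\{0,2,3,5\}\}$, it suffices to show $T_N$ has nonempty interior for some $N$; and for $k>N$ the weights are $G_k=c_0q_0^{k}(1+\rho_k)$ with $q_0$ near $\tfrac14$ and $\rho_k$ small. I would get the interval by revisiting the Guthrie--Nymann / Nymann--S\'aenz argument \cite{GN,NS} that $E(b)$ has nonempty interior: it produces an explicit subinterval of $E(b)$ through a gap-covering in which the ``$2$'' and ``$3$'' copies of $\{0,2,3,5\}$ overlap (which needs contraction ratio $\ge\tfrac16$) followed by an interleaving of copies, and the task is to verify that this construction is \emph{stable} --- that the endpoints of the interval it yields depend continuously on the contraction ratio and are not destroyed by replacing the geometric weights $q_0^{k}$ by $q_0^{k}(1+\rho_k)$ with $\rho_k\to0$. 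Granting this robustness (which, for $\delta$ small, also shows $x_q\in\mathcal{MC}$ for every $q\in I$), $T_N$ and hence $E(x)$ contains an interval, so every $x\in V\setminus\{0\}$ satisfies (i)--(iii); by Lemma~\ref{lemmaMC}, $x\in\mathcal{MC}$, and therefore $\mathcal{MC}$ is $\mathfrak{c}$-lineable.
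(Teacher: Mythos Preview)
Your verification of conditions (ii) and (iii) of Lemma~\ref{lemmaMC} is fine, and the linear-independence argument via the dominant term is correct. The genuine gap is condition~(i). You write ``Granting this robustness\dots'' and propose to obtain the interval by a perturbation of the Guthrie--Nymann/Nymann--S\'aenz construction, but you do not actually carry this out. The weights in your tails are $G_k=c_0q_0^{k}(1+\rho_k)$ with $\rho_k\to 0$; showing that the achievement set with digit set $\{0,2,3,5\}$ still contains an interval in this non-self-similar situation is the whole content of the theorem for your family, and an appeal to ``stability'' is not a proof. (Indeed, the Cantorval $T$ is already a rather delicate object: arbitrarily small perturbations of the ratio send the pure $\{0,2,3,5\}$--system into $\mathcal C$.)

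The paper avoids this difficulty by a different choice of block: instead of $(3,2)$ it uses $(4,3,2)$ and takes $q\in[\tfrac16,\tfrac{2}{11})$. The point is that the subsums of $(4,3,2)$ are $\{0,2,3,4,5,6,7,9\}$, which contain the full range $\{2,3,4,5,6,7\}=2+\{0,1,2,3,4,5\}$; and $\{0,1,2,3,4,5\}$ is exactly the set of subsums of $(1,1,1,1,1)$. Hence, for any linear combination $x=\sum_i\beta_i x_{q_i}$ one has $E(x)\supseteq 2\sum_{n\ge 0}(\sum_i\beta_i q_i^{n})+E(y)$, where $y$ is the corresponding combination of the five-block sequences $y_q=(1,1,1,1,1,q,\dots)$. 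Now condition~(i) reduces to showing that $E(y)$ contains an interval, and this follows \emph{directly} from Kakeya's sufficient condition (Theorem~\ref{kakeya}(3)), since the bound $q_1\ge\tfrac16$ gives $\tfrac{1-q_1}{5q_1}\le 1$ and hence $|y(n)|\le\sum_{k>n}|y(k)|$ for large $n$. No perturbation argument is needed.

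Your block $(3,2)$ gives digit set $\{0,2,3,5\}$, which after any integer shift is \emph{not} an interval of integers, so the clean reduction to Kakeya's criterion is unavailable. If you want to keep your family, you must supply an honest proof that the perturbed $\{0,2,3,5\}$--system has nonempty interior; otherwise, switching to $(4,3,2)$ (or more generally any block whose subsum set contains a full integer interval of length at least the block sum) turns~(i) into a one-line application of Theorem~\ref{kakeya}.
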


\begin{proof}
Let
$$
x_q = (4,3,2,4q,3q,2q,4q^2,3q^2,2q^2,4q^3, \dots)
$$
\\ and
$$
y_q = (1,1,1,1,1,q,q,q,q,q,q^2,q^2,q^2,q^2,q^2,q^3, \dots)
$$ for $q\in[\frac{1}{6},\frac{2}{11}) $.
\\Observe that the sequences $x_q, q\in[\frac{1}{6},\frac{2}{11})$ are linearly independent. We need to show that each non-zero linear combination of sequences $x_q$ fulfils the assumptions (i)--(iii) of Lemma \ref{lemmaMC} and therefore it is actually in $\mathcal{MC}$. To prove this, let us fix $q_1 > q_2 > \dots > q_m\in[\frac{1}{6},\frac{2}{11})$, $\beta_1,\beta_2, \dots ,\beta_m\in\R$ and define sequences $x$ and $y$ by
$$
x(n)=\beta_1x_{q_1}(n)+\beta_2x_{q_2}(n)+ \dots +\beta_mx_{q_m}(n)
$$
and
$$
y(n)=\beta_1y_{q_1}(n)+\beta_2y_{q_2}(n)+ \dots +\beta_my_{q_m}(n).
$$

At first, we will check that for almost all $n$
\begin{equation} \label{eq1}
2|\beta_1{q_1}^n+\beta_2{q_2}^n+ \dots +\beta_m{q_m}^n|>9\sum_{k>n}|\beta_1{q_1}^k+\beta_2{q_2}^k+ \dots +\beta_m{q_m}^k|.
\end{equation}
We have
$$
\frac{2|\beta_1{q_1}^n+\beta_2{q_2}^n+ \dots +\beta_m{q_m}^n|}{9\sum_{k>n}|\beta_1{q_1}^k+\beta_2{q_2}^k+ \dots +\beta_m{q_m}^k|} \geqslant \frac{2|\beta_1{q_1}^n+\beta_2{q_2}^n+ \dots +\beta_m{q_m}^n|}{9\sum_{k>n}|\beta_1{q_1}^k|+|\beta_2{q_2}^k|+ \dots +|\beta_m{q_m}^k|}$$
$$=\frac{2|\beta_1{q_1}^n+\beta_2{q_2}^n+ \dots +\beta_m{q_m}^n|}{9(|\beta_1|\frac{{q_1}^{n+1}}{1-q_1}+|\beta_2|\frac{{q_2}^{n+1}}{1-q_2}+ \dots +|\beta_m|\frac{{q_m}^{n+1}}{1-q_m})} \mathop{\longrightarrow}_{n \to \infty} \frac{2}{9}\cdot\frac{1-q_1}{q_1} > \frac{2}{9}\cdot\frac{1-\frac{2}{11}}{\frac{2}{11}}= 1.
$$
Note that if $n$ is not divisible by $3$, then $|x(n)| \geqslant |x(n+1)|$. On the other hand, if $n=3l$, then
$$|x(n)|=2|\beta_1q_1^l+ \dots +\beta_mq_m^l|$$ and $$|x(n+1)|=3|\beta_1q_1^{l+1}+ \dots +\beta_mq_m^{l+1})| \leqslant 9\sum_{k>l}|\beta_1q_1^k+ \dots +\beta_mq_m^k|.$$
Hence by \eqref{eq1} we obtain $|x(n)| \geqslant |x(n+1)|$ for almost all $n$. By \eqref{eq1} we also have $|x(n)| > \sum_{i>n} |x(i)| $ for infinitely many $n$.
\\Now we will show that
\begin{equation} \label{eq2}
|\beta_1{q_1}^n+\beta_2{q_2}^n+ \dots +\beta_m{q_m}^n| \leqslant 5\sum_{k>n}|\beta_1{q_1}^k+\beta_2{q_2}^k+ \dots +\beta_m{q_m}^k|.
\end{equation}
We have
$$
\frac{|\beta_1{q_1}^n+\beta_2{q_2}^n+ \dots +\beta_m{q_m}^n|}{5\sum_{k>n}|\beta_1{q_1}^k+\beta_2{q_2}^k+ \dots +\beta_m{q_m}^k|} \leqslant \frac{|\beta_1{q_1}^n+\beta_2{q_2}^n+ \dots +\beta_m{q_m}^n|}{5|\sum_{k>n}\beta_1{q_1}^k+\beta_2{q_2}^k+ \dots +\beta_m{q_m}^k|}$$
$$=\frac{|\beta_1+\beta_2 (\frac{q_2}{q_1})^n + \dots +\beta_m(\frac{q_m}{q_1})^n|}{5|\beta_1\sum_{i>0}{q_1}^i+\beta_2 (\frac{q_2}{q_1})^n \sum_{i>0} {q_2}^i+ \dots +\beta_m (\frac{q_m}{q_1})^n \sum_{i>0} {q_m}^i|} $$
$$ \mathop{\longrightarrow}_{n \to \infty} \frac{1}{5}\cdot \frac{1-q_1}{q_1} \leqslant \frac{1}{5}\cdot\frac{1-\frac{1}{6}}{\frac{1}{6}}= 1.
$$
By \eqref{eq2} we obtain that $|y(n)| \leqslant \sum_{k>n} |y(k)| $ for almost all $n$. Therefore by Theorem \ref{kakeya}, the set $E(y)$ is a finite union of closed intervals. Thus $E(y)$ has non-empty interior.

To end the proof we need to show that $E(x)$ has non-empty interior. We will prove that
$$ 2 \sum_{n=0} (\beta_1{q_1}^n+\beta_2{q_2}^n+ \dots +\beta_m{q_m}^n) + E(y) \subseteq E(x).
$$Let
$$ t \in 2 \sum_{n=0} (\beta_1{q_1}^n+\beta_2{q_2}^n+ \dots +\beta_m{q_m}^n) + E(y). $$ Note that any element $t$ of $E(y)$ is of the form
$$ t=k_0(\beta_1+\beta_2+ \dots +\beta_m) + k_1 (\beta_1{q_1}+\beta_2{q_2}+ \dots +\beta_m{q_m}) $$
$$+ k_2 (\beta_1{q_1}^2+\beta_2{q_2}^2+ \dots +\beta_m{q_m}^2)+ \dots$$ where $k_n \in \{0,1,2,3,4,5\}$. Thus $t$ is of the form
$$t= 2 \sum_{n=0} (\beta_1{q_1}^n+\beta_2{q_2}^n+ \dots +\beta_m{q_m}^n)+$$
$$+ [ k_0(\beta_1+\beta_2+ \dots +\beta_m) + k_1 (\beta_1{q_1}+\beta_2{q_2}+ \dots +\beta_m{q_m})$$
$$ + k_2 (\beta_1{q_1}^2+\beta_2{q_2}^2+ \dots +\beta_m{q_m}^2)+\dots  ]$$
$$=(2+k_0)(\beta_1+\beta_2+ \dots +\beta_m)+(2+k_1)(\beta_1{q_1}+\beta_2{q_2}+ \dots+\beta_m{q_m})+$$ $$+(2+k_2)(\beta_1{q_1}^2+\beta_2{q_2}^2+ \dots +\beta_m{q_m}^2)+ \dots
$$
Note that each number from $\{2,3,4,5,6,7\}$, that is every number of the form $2+k_n$, can be written as a sum of numbers $4,3,2$. Hence $t \in E(x)$ and $E(x)$ has non-empty interior. So $x \in \mathcal{MC}$.
\end{proof}

\section{The topological size and Borel classification of  $\mathcal C$, $\mathcal I$ and $\mathcal{MC}$.}

Let us observe that all the sets $c_{00}$,  $\mathcal C$, $\mathcal I$ and $\mathcal{MC}$ are dense in $\ell_1$. Moreover, $c_{00}$ is an $\mathcal F_{\sigma}$-set of the first category. We are interested in studying the topological size and Borel classification of considered sets. To do it, let us consider the hyperspace $H(\R)$, that is the space of all non-empty compact subsets of reals, equipped with the Vietoris topology (see \cite{Ke}, 4F, pp.24-28). Recall, that the Vietoris topology is generated by the subbase of sets of the  form $\{ K\in H(\R): K \subset U \}$ and $\{ K\in H(\R): K \cap U \neq \emptyset \}$ for all open sets $U$ in $\R$.
This topology is metrizable by the Hausdorff metric $d_H$ given by the formula
$$
d_H(A,B)= \max \{ \max_{t \in A} d(t,B), \max_{s \in B} d(s,A) \}
$$
where $d$ is the natural metric in $\R$.
It is known that the set $N$ of all nowhere dense compact sets is a $G_\delta$-set in $H(\R)$ and the set $F$ of all compact sets with finite number of connected components is an $\mathcal F_{\sigma}$-set.
To see this, it is enough to observe  that
\begin{itemize}
\item $K$ is nowhere dense if and only if for any set $U_n$ from a fixed countable base of natural topology in $\R$ there exists a set $U_m$ from this base, such that $cl(U_m) \subset U_n$ and $K \subset (cl(U_m))^c$;
\item $K$ has more then $k$ components if and only if there exist pairwise disjoint open intervals $J_1,J_2, \dots ,J_{k+1}$, such that $K \subset J_1 \cup J_2 \cup \dots \cup J_{k+1}$ and $K \cap J_i \neq \emptyset $ for $i=1,2, \dots ,k+1$.
\end{itemize}
Now, let us observe that if we assign the set $E(x)$ to the sequence $x \in \ell_1$, we actually define the function $E: \ell_1 \to H(\R)$.

\begin{lemma}
The function $E$ is Lipschitz with Lipschitz constant $L=1$, hence it is continuous.
\end{lemma}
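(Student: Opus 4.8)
\emph{Proof plan.} The plan is to verify the Lipschitz estimate directly from the definition of the Hausdorff metric, using the key observation that the \emph{same} index set $A\subseteq\N$ that realizes a point of $E(x)$ also realizes a point of $E(y)$. First I would note that for every $z\in\ell_1$ the set $E(z)$ is a non-empty compact subset of $\R$ (this is contained in Theorem \ref{kakeya}, or can be seen directly: $E(z)$ is the image of the compact space $2^\N$ under the continuous map $A\mapsto\sum_{n\in A}z(n)$), so $E$ indeed takes values in $H(\R)$ and $d_H(E(x),E(y))$ is well defined.

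Next, fix $x,y\in\ell_1$ and an arbitrary point $a\in E(x)$, say $a=\sum_{n\in A}x(n)$ for some $A\subseteq\N$. Put $b:=\sum_{n\in A}y(n)$; the series converges absolutely since $y\in\ell_1$, and by definition $b\in E(y)$. Then
$$
|a-b|=\Bigl|\sum_{n\in A}\bigl(x(n)-y(n)\bigr)\Bigr|\le\sum_{n\in A}|x(n)-y(n)|\le\sum_{n\in\N}|x(n)-y(n)|=\|x-y\|_1 .
$$
Hence $d(a,E(y))\le\|x-y\|_1$, and since $a\in E(x)$ was arbitrary, $\max_{a\in E(x)}d(a,E(y))\le\|x-y\|_1$. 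By symmetry (interchanging the roles of $x$ and $y$) we also get $\max_{b\in E(y)}d(b,E(x))\le\|x-y\|_1$, and therefore
$$
d_H\bigl(E(x),E(y)\bigr)=\max\Bigl\{\max_{a\in E(x)}d(a,E(y)),\ \max_{b\in E(y)}d(b,E(x))\Bigr\}\le\|x-y\|_1 .
$$
This is exactly the assertion that $E$ is Lipschitz with constant $L=1$, and continuity of $E$ follows at once.

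I do not expect any genuine obstacle here: the only points requiring a word of care are that $E(z)$ is compact and non-empty (so that the maxima in the Hausdorff metric are attained and the statement is meaningful) and that the defining series are absolutely convergent, which is automatic in $\ell_1$. Everything else is the triangle inequality.
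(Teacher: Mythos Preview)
Your proof is correct and follows essentially the same approach as the paper: pick a point of $E(x)$ realized by an index set $A$, compare it to the point of $E(y)$ realized by the \emph{same} $A$, bound the difference by $\|x-y\|_1$, and conclude via the definition of $d_H$. Your write-up is in fact a bit more careful than the paper's (you explicitly check that $E$ lands in $H(\R)$ and invoke the symmetric half of the Hausdorff distance), but the argument is the same.
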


\begin{proof}
Let $t \in E(x)$. Then there exists a subset $A$ of $\N$ such that $t= \sum_{n \in A} x(n).$ We have
$$d(t,E(y)) \leqslant d(t,\sum_{n \in A} y(n))=\left|\sum_{n \in A} (x(n)-y(n))\right| \leqslant \sum_{n \in \N} |(x(n)-y(n))|= \Vert x-y \Vert _1$$
where $\Vert\cdot\Vert_1$ denotes the norm in $\ell_1$. Hence,
$ d_H(E(x),E(y)) \leqslant \Vert x-y \Vert_1 $.
\end{proof}

\begin{theorem}
The set $\mathcal C$ is a dense $G_\delta$-set (and hence residual), $\mathcal I$ is a true $\mathcal F_\sigma$-set (i.e. it is $\mathcal F_\sigma$ but not $\mathcal G_\delta$) of the first category, and $\mathcal{MC}$ is in the class $(\mathcal{F}_{\sigma\delta}\cap\mathcal{G}_{\delta\sigma})\setminus\mathcal G_\delta$.
\end{theorem}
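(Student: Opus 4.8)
The plan is to realize each of $\mathcal C$, $\mathcal I$ and $\mathcal{MC}$ as a Borel-simple subset of $\ell_1$ built from the continuous map $E\colon\ell_1\to H(\R)$ of the preceding lemma together with the sets $N$ of nowhere dense compacta (a $G_\delta$ in $H(\R)$) and $F$ of compacta with finitely many connected components (an $\mathcal F_\sigma$ in $H(\R)$), and then to extract the ``true $\mathcal F_\sigma$'' and ``not $G_\delta$'' assertions from the Baire category theorem in the Polish space $\ell_1$.

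First I would verify the identification. For $x\in c_{00}$ the set $E(x)$ is finite, hence nowhere dense and with finitely many components, so $x\in E^{-1}(N)\cap E^{-1}(F)$. For $x\in\ell_1\setminus c_{00}$, Theorem~\ref{kakeya}(1) gives that $E(x)$ is perfect, and Theorem~\ref{1} gives that $E(x)$ is a finite union of (necessarily nondegenerate) closed intervals, a Cantor set, or a Cantorval. Among these three, only the Cantor set is nowhere dense (the other two have non-empty interior), and only the finite union of intervals has finitely many connected components: a Cantor set has uncountably many components, while a Cantorval has infinitely many, since it is homeomorphic to $T$ and $T$ is a perfect compact set which is not a finite union of intervals. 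Consequently
$$
E^{-1}(N)=c_{00}\cup\mathcal C,\qquad E^{-1}(F)=c_{00}\cup\mathcal I,
$$
so that $\mathcal C=E^{-1}(N)\setminus c_{00}$, $\mathcal I=E^{-1}(F\setminus N)$, and $\mathcal{MC}=\ell_1\setminus\bigl(c_{00}\cup\mathcal C\cup\mathcal I\bigr)$.

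Next I would read off the Borel classes. Since $N$ is $G_\delta$ in $H(\R)$ and $c_{00}$ is $\mathcal F_\sigma$ in $\ell_1$, the set $\mathcal C=E^{-1}(N)\cap(\ell_1\setminus c_{00})$ is an intersection of two $G_\delta$-sets, hence $G_\delta$; being dense it is residual. Since $F$ and $H(\R)\setminus N$ are $\mathcal F_\sigma$ in $H(\R)$, so is $F\setminus N$, and hence $\mathcal I=E^{-1}(F\setminus N)$ is $\mathcal F_\sigma$ in $\ell_1$; being contained in the meager set $\ell_1\setminus\mathcal C$, it is of the first category. Finally $\mathcal{MC}=(\ell_1\setminus c_{00})\cap(\ell_1\setminus\mathcal C)\cap(\ell_1\setminus\mathcal I)$ is the intersection of an $\mathcal F_\sigma$-set with two $G_\delta$-sets; since in a metric space every $G_\delta$-set and every $\mathcal F_\sigma$-set belongs to $\mathcal F_{\sigma\delta}\cap\mathcal G_{\delta\sigma}$, and this class is closed under finite intersections, we get $\mathcal{MC}\in\mathcal F_{\sigma\delta}\cap\mathcal G_{\delta\sigma}$ (and again $\mathcal{MC}$ is of the first category).

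For the optimality statements I would use that $\ell_1$, being Polish, is a Baire space, so that any two dense $G_\delta$-subsets of $\ell_1$ have non-empty intersection. If $\mathcal I$ were $G_\delta$, then, being dense, it would be a dense $G_\delta$-set, and so is $\mathcal C$; this would force $\mathcal I\cap\mathcal C\neq\emptyset$, contradicting the trichotomy of Theorem~\ref{1}. Hence $\mathcal I$ is a true $\mathcal F_\sigma$-set. The analogous comparison of the dense set $\mathcal{MC}$ with the residual set $\mathcal C$ shows $\mathcal{MC}\notin\mathcal G_\delta$. The only step with genuine content is the identification, and there the one claim needing a moment's thought is that a Cantorval has infinitely many connected components — which holds because it is homeomorphic to $T$, $T$ is compact and perfect, and a compact perfect set with finitely many components would be a finite union of nondegenerate closed intervals, contrary to the known structure of $T$; the rest is bookkeeping with the stability of Borel classes plus one use of the Baire category theorem.
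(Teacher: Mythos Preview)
Your proof is correct and follows essentially the same route as the paper: identify $E^{-1}(N)=c_{00}\cup\mathcal C$ and $E^{-1}(F)=c_{00}\cup\mathcal I$, read off the Borel classes using continuity of $E$ and the $\mathcal F_\sigma$-ness of $c_{00}$, and invoke Baire category for the optimality. The only cosmetic differences are that you write $\mathcal I=E^{-1}(F\setminus N)$ directly (the paper instead writes $\mathcal I=(\mathcal I\cup c_{00})\cap(\mathcal I\cup\mathcal{MC})$) and that you spell out why a Cantorval has infinitely many components, which the paper leaves implicit.
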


\begin{proof}
Let us observe that $ \mathcal{C} \cup c_{00} = E^{-1}[N]$ and $ \mathcal{I} \cup c_{00} = E^{-1}[F]$
where $N$, $F$, $E$ are defined as before. Hence $ \mathcal{C} \cup c_{00} $ is $G_\delta$-set and $ \mathcal{I} \cup c_{00} $ is $\mathcal{F}_\sigma$-set. Thus $\mathcal{C}$ is $G_\delta$-set (because $c_{00}$ is $\mathcal{F}_\sigma$-set) and $\mathcal{I} \cup \mathcal{MC}$ is $\mathcal{F}_\sigma$. Moreover,  $\mathcal{I} = (\mathcal{I} \cup c_{00}) \cap (\mathcal{I} \cup \mathcal{MC}$) is $\mathcal{F}_\sigma$-set, too. By the density of $\mathcal{C}$, $\mathcal{C}$ is residual.  Since $\mathcal{I}$ is dense of the first category, it cannot be $\mathcal{G_\delta}$-set. For the same reason, $\mathcal{MC}$ also cannot be $\mathcal{G_\delta}$-set. Since $\mathcal{MC}$ is a difference of two $\mathcal{F_\sigma}$-sets, it is in the class $\mathcal{F}_{\sigma\delta}\cap\mathcal{G}_{\delta\sigma}$.
\end{proof}

\begin{remark}\emph{
In \cite{BG} it was shown the following similar result by the use of quite different methods: the set of bounded sequences, with the set of limit points homeomorphic to the Cantor set, is strongly $\mathfrak{c}$-algebrable and residual in $l^\infty$.}
\end{remark}

\section{Spaceability}

In this section we will show that $\mathcal I$ is spaceable while $\mathcal C$ is not spaceable. This shows that there is a subset $M$ of $\ell_1$ containing a dense $\mathcal G_\delta$ subset and such that it contains a linear subspace of dimension $\mathfrak{c}$, but $Y\setminus M\neq\emptyset$ for any infinitely dimensional closed subspace $Y$ of $\ell_1$.

\begin{theorem}\label{spaceability_of_I}
Let $\mathcal I_1$ be a subset of $\mathcal I$ which consists of those $x\in \ell_1$ for which $E(x)$ is an interval. Then $\mathcal I_1$ is spaceable.
\end{theorem}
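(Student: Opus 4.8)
The plan is to exhibit an explicit closed, infinite‑dimensional subspace $Y\subseteq\ell_1$ every nonzero element $y$ of which satisfies the \emph{tail‑domination} condition $|y(n)|\le\sum_{i>n}|y(i)|$ for \emph{all} $n\in\N$. This condition (a strengthening of the hypothesis of Theorem \ref{kakeya}(3), where it is only required for large $n$) forces $E(y)$ to be a single closed interval, hence $y\in\mathcal I_1$.

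For the construction I would fix a partition $\N=\bigsqcup_{k\in\N}T_k$ into infinitely many infinite sets, enumerate each $T_k=\{t^k_1<t^k_2<\cdots\}$, and define $e_k\in\ell_1$ by $e_k(t^k_j)=2^{-j}$ and $e_k(n)=0$ for $n\notin T_k$, so $\|e_k\|_1=1$. Let $Y$ be the closed linear span of $\{e_k:k\in\N\}$ in $\ell_1$. Since the supports $T_k$ are pairwise disjoint, $\bigl\|\sum_{k\in F}c_ke_k\bigr\|_1=\sum_{k\in F}|c_k|$ for every finite $F$; hence the $e_k$ are linearly independent, $Y$ is an infinite‑dimensional closed subspace (isometric to $\ell_1$), and every $y\in Y$ is of the form $y=\sum_kc_ke_k$ with $\sum_k|c_k|<\infty$, i.e. $y(t^k_j)=c_k2^{-j}$. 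Then for $y\neq0$ and any $n=t^k_j$,
\[
|y(n)|=|c_k|\,2^{-j}=|c_k|\sum_{l>j}2^{-l}=|c_k|\!\!\sum_{i\in T_k,\ i>n}\!\!|e_k(i)|\ \le\ \sum_{i>n}|y(i)|,
\]
the last inequality because the discarded summands are nonnegative; moreover $y$ has infinite support, so $y\in\ell_1\setminus c_{00}$.

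It remains to conclude that such a $y$ lies in $\mathcal I_1$. By Theorem \ref{kakeya}(3), tail domination already gives that $E(y)$ is a finite union of closed intervals, so I only need connectedness. Writing $R_N^+$ (resp. $R_N^-$) for the sum of the positive terms (resp. of the absolute values of the negative terms) of $y$ among $\{y(i):i>N\}$, set
\[
E_N:=\Bigl\{\textstyle\sum_{n\in A}y(n):A\subseteq\{1,\dots,N\}\Bigr\}+[-R_N^-,\,R_N^+].
\]
Then $E(y)\subseteq E_N$ trivially, and any point of $\bigcap_NE_N$ lies within $R_N^++R_N^-=\sum_{i>N}|y(i)|\to0$ of a genuine subsum, hence in the closed set $E(y)$; so $E(y)=\bigcap_NE_N$. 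A back‑to‑front collapse of the Minkowski product defining $E_N$ finishes the job: adjoining the factor $\{0\}\cup\{y(n)\}$ at stage $n$ merges $[-R_n^-,R_n^+]$ with its translate by $y(n)$ into $[-R_{n-1}^-,R_{n-1}^+]$, the two overlapping precisely because $|y(n)|\le R_n^++R_n^-=\sum_{i>n}|y(i)|$. Hence every $E_N$ equals the fixed interval $[-R_0^-,R_0^+]$, so $E(y)=[-R_0^-,R_0^+]$, a single (nondegenerate, as $y\neq0$) closed interval. Thus $Y\setminus\{0\}\subseteq\mathcal I_1$, and $\mathcal I_1$ is spaceable. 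I expect the only mildly delicate point to be this last step — promoting ``finite union of intervals'' to ``one interval''; the construction of $Y$ and the domination estimate are routine.
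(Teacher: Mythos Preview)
Your proof is correct and uses exactly the same construction as the paper: the closed span of disjointly supported ``dyadic'' blocks $e_k$ with $e_k(t^k_j)=2^{-j}$, which is an isometric copy of $\ell_1$. The only difference is in the last step: the paper observes directly that disjoint supports give $E\bigl(\sum_k c_k e_k\bigr)=\sum_k c_k\,E(e_k)=\sum_k c_k[0,1]$, a Minkowski sum of intervals through $0$ and hence a single interval, whereas your tail-domination/nested-$E_N$ argument reaches the same conclusion by a slightly longer but equally valid route.
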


\begin{proof}
Let $A_1,A_2,\dots$ be a partition of $\N$ into infinitely many infinite subsets. Let $A_n=\{k_n^1<k_n^2<k_n^3<\dots\}$. Define $x_n\in \ell_1$ in the following way. Let $x_n(k_n^j)=2^{-j}$ and $x_n(i)=0$ if $i\notin A_n$. Then $\Vert x_n\Vert_1=1$ and $\{x_n:x\in\N\}$ forms a normalised basic sequence. Let $Y$ be a closed linear space generated by $\{x_n:x\in\N\}$. Then
$$
y\in Y\iff\exists t\in \ell_1\Big(y=\sum_{n=1}^\infty t(n)x_n\Big).
$$
Since $E(x_n)=[0,1]$, then $E(\sum_{n=1}^\infty t(n)x_n)=\bigcup_{n=1}^\infty I_n$ where $I_n$ is an interval with endpoints $0$ and $t(n)$. Put $t^+(n)=\max\{t(n),0\}$ and $t^-(n)=\min\{-t(n),0\}$. Then $E(\sum_{n=1}^\infty t(n)x_n)=\left[\sum_{n=1}^\infty t^-(n),\sum_{n=1}^\infty t^+(n)\right]$ and the result follows.
\end{proof}

Let us remark the very recent result by Bernal-Gonz\'alez and Ord\'onez Cabrera \cite[Theorem 2.2]{BGOC}. The authors gave sufficient conditions for spaceability of sets in Banach spaces. Using that result, one can prove spaceability of $\mathcal I$ but it cannot be used to prove Theorem \ref{spaceability_of_I}, since the assumptions are not fulfilled.

However we do not know more results giving the sufficient conditions for a set in Banach space to not be spaceable. An interesting example of a non-spaceable set was given in the classical paper \cite{Gur66} by Gurarii where it was proved that the set of all differentiable functions from $C[0,1]$ is not spaceable. It is well known that the set of all differentiable functions in $C[0,1]$ is dense but meager. We will prove that even dense $\mathcal G_\delta$-sets in Banach spaces may not be spaceable.

\begin{theorem}
Let $Y$ be an infinitely dimensional closed subspace of $\ell_1$. Then there is $y\in Y$ such that $E(y)$ contains an interval.
\end{theorem}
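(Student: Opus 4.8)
First I would reduce to Kakeya's criterion. Since $E(x)$ does not change if the entries of $x$ are permuted and is merely translated if some entries switch sign (recall that $x\in\mathcal I\iff|x|\in\mathcal I$), it suffices to produce $y\in Y\setminus\{0\}$ whose non-increasing rearrangement $\tilde y$ of $(|y(n)|)_{n}$ satisfies $\tilde y(n)\le\sum_{i>n}\tilde y(i)$ for all large $n$. Then, by Theorem~\ref{kakeya}(3), $E(\tilde y)=E(|y|)$ and its translate $E(y)$ are finite unions of closed intervals; and since $y\notin c_{00}$, Theorem~\ref{kakeya}(1) makes $E(y)$ perfect, so this finite union of intervals is not a finite set and hence has nonempty interior.

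The construction uses one elementary fact: every infinite-dimensional closed subspace $Z\subseteq\ell_1$ contains, for each $\varepsilon>0$, a $z$ with $\|z\|_1=1$ and $\|z\|_\infty<\varepsilon$. (Otherwise $\|\cdot\|_1$ and $\|\cdot\|_\infty$ would be equivalent on $Z$, which is impossible: a routine gliding-hump argument gives normalised, almost disjointly supported $u_1,\dots,u_M\in Z$, and then $\frac1M\sum_{i=1}^M u_i\in Z$ has $\ell_1$-norm close to $1$ but $\ell_\infty$-norm $O(1/M)$.) Fix $\varepsilon=\tfrac13$ and set $\delta_k:=\varepsilon 2^{-k}/200$. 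Using this fact and that $Y\cap\ker P_{\{1,\dots,N\}}$ is again infinite-dimensional for every $N$ ($P_F$ being the coordinate projection onto $F\subseteq\mathbb N$), I would recursively choose $0=N_0<N_1<N_2<\cdots$ and $w_k\in Y$ with $\operatorname{supp}w_k\subseteq(N_{k-1},\infty)$, $\|w_k\|_1=1$ and $\|w_k\|_\infty<\varepsilon$, taking at step $k$ the integer $N_k$ large enough that $\|P_{(N_k,\infty)}w_l\|_1<\delta_k$ for all $l\le k$. Then $y:=\sum_{k\ge1}2^{-k}w_k$ converges in the closed space $Y$, so $y\in Y$, $y\ne0$, $y\notin c_{00}$.

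Kakeya's condition for $y$ is checked blockwise along $B_j:=(N_{j-1},N_j]$, which partition $\mathbb N$. For $n\in B_j$ one has $y(n)=2^{-j}w_j(n)+\sum_{k<j}2^{-k}w_k(n)$, and the second term restricted to $B_j$ has $\ell_1$-norm $<\sum_{k<j}2^{-k}\delta_{j-1}<\delta_{j-1}=2^{-j}\varepsilon/100$; hence every coordinate of $y$ on $B_j$ has modulus $<2^{-j}\varepsilon(1+\tfrac1{100})$, while the total mass $\mu_j:=\sum_{n\in B_j}|y(n)|$ exceeds $2^{-j}(1-\varepsilon)$. Given a value $t$ of $|y|$, let $\lambda$ be the least $j$ with $2^{-j}\varepsilon(1+\tfrac1{100})\le t$; then all coordinates of $y$ lying in the blocks $B_j$ with $j\ge\lambda$ are $<t$, so $\sum_{i:\tilde y(i)<t}\tilde y(i)\ge\sum_{j\ge\lambda}\mu_j>(1-\varepsilon)\,2^{1-\lambda}$, whereas minimality of $\lambda$ gives $t<2^{1-\lambda}\varepsilon(1+\tfrac1{100})$. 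Since $\varepsilon=\tfrac13$ satisfies $1-\varepsilon>\varepsilon(1+\tfrac1{100})$, the former quantity exceeds $t$; accounting for ties, this gives $\tilde y(n)\le\sum_{i>n}\tilde y(i)$ for every $n$, and the reduction completes the proof.

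The hard part is exactly this perturbation. ``Being a finite union of closed intervals'' is neither a closed nor an open nor a $G_\delta$ property of a sequence in the $\ell_1$-topology, so one cannot just take a clean disjointly supported model and perturb it. The reason the estimate survives is that demanding $\|w_k\|_\infty<\varepsilon$ buys a definite amount of room --- block $B_k$'s largest coordinate is at most $\varepsilon 2^{-k}$ while the later blocks carry total mass $\approx\sum_{j>k}2^{-j}=2^{-k}$ --- and the extra smallness imposed on $N_k$ pushes the unavoidable tail contamination of $w_1,\dots,w_{k-1}$ inside $B_k$ far below that room, making the block-by-block comparison above robust. The remaining work is to pin down the constants so that the room genuinely dominates the contamination; no further idea is required.
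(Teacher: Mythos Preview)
Your argument is correct, and it takes a genuinely different route from the paper's. The paper builds $y=\sum_k 2^{-k}x_k$ from block-supported elements $x_k\in Y$ with $\Vert x_k\Vert_1\approx 1$ and then \emph{directly} verifies that $E(y)$ contains a specific interval $I=[\min E_1,\max E_1]$ by approximating each $t\in I$ by elements of $E_1+\tfrac12E_2+\dots+\tfrac{1}{2^{k-1}}E_k\subset E(y)$; no control on $\Vert x_k\Vert_\infty$ is used, and the conclusion is only that $y\in\mathcal I\cup\mathcal{MC}$. You instead impose the extra constraint $\Vert w_k\Vert_\infty<\tfrac13$, which buys enough headroom to push the non-increasing rearrangement of $|y|$ through Kakeya's criterion (Theorem~\ref{kakeya}(3)); this yields the stronger conclusion $y\in\mathcal I$, at the price of the additional (standard) lemma that every infinite-dimensional closed $Z\subset\ell_1$ contains unit vectors of arbitrarily small sup-norm. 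Both constructions rest on the same gliding-hump skeleton; the difference is whether one checks interval-containment by hand or defers to Kakeya.

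Two small points worth tightening in your write-up. First, your minimality-of-$\lambda$ step implicitly needs $\lambda\ge 2$; this is true because on $B_1$ there is no contamination, so every value $t$ of $|y|$ satisfies $t<2^{-1}\varepsilon<2^{-1}\varepsilon(1+\tfrac{1}{100})$, ruling out $\lambda=1$. Second, the passage from ``$t<\sum_{|y(m)|<t}|y(m)|$ for every nonzero value $t$'' to ``$\tilde y(n)\le\sum_{i>n}\tilde y(i)$ for every $n$'' deserves one explicit sentence: the entries with $|y(m)|<t$ all sit at positions $>n$ in the rearrangement, so the tail sum dominates, and since $y\notin c_{00}$ the rearrangement has $\tilde y(n)>0$ for all $n$.
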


\begin{proof}
Let $Y$ be an infinitely dimensional closed subspace of $\ell_1$. Let $\varepsilon_n\searrow 0$. Let $x_1$ be any nonzero element of $Y$ with $\Vert x_1\Vert_1=1+\varepsilon_1.$ Since $x_1\in \ell_1$, there is $n_1$ with $\sum_{n=n_1+1}^\infty |x_1(n)|\leq\varepsilon_1$. Let $E_1$ consist of finite sums $\sum_{n=1}^{n_1}\delta_nx_1(n)$ where $\delta_i\in\{0,1\}$. Then $E_1$ is a finite set with $\min E_1=\sum_{n=1}^{n_1}x_1^-(n)$, $\max E_1=\sum_{n=1}^{n_1}x_1^+(n)$ and $1\leq \max E_1-\min E_1\leq 1+\varepsilon_1$.

Let $Y_1=Y\cap\{x\in \ell_1:x(n)=0$ for every $n\leq n_1\}$. Since $\{x\in \ell_1:x(n)=0$ for every $n\leq n_1\}$ has a finite co-dimension, then $Y_1$ is infinitely dimensional. Let $x_2$ be any nonzero element of $Y_1$ with $\Vert x_2\Vert_1=1+\varepsilon_2.$ Since $x_2\in \ell_1$, there is $n_2>n_1$ with $\sum_{n=n_2+1}^\infty |x_i(n)|\leq\varepsilon_2$, $i=1,2$. Let $E_2$ consist of finite sums $\sum_{n=n_1+1}^{n_2}\delta_nx_2(n)$, where $\delta_i\in\{0,1\}$. Then $E_2$ is a finite set with $\min E_2=\sum_{n=n_1+1}^{n_2}x_2^-(n)$, $\max E_2=\sum_{n=n_1+1}^{n_2}x_2^+(n)$ and $1\leq \max E_2-\min E_2\leq 1+\varepsilon_2$.

Proceeding inductively, we define natural numbers $n_1<n_2<n_3<\dots$, infinitely dimensional closed spaces $Y\supset Y_1\supset Y_2\supset\dots$ such that $Y_k=\{x\in Y:x(n)=0$ for every $n\leq n_k\}$, nonzero elements $x_k\in Y_{k-1}$ with $\Vert x_k\Vert_1=1+\varepsilon_k$ and $\sum_{n=n_{k}+1}^\infty |x_i(n)|\leq\varepsilon_k$, $i=1,2,\dots,k$, and finite sets $E_k$ consisting of sums $\sum_{n=n_{k-1}+1}^{n_k}\delta_nx_k(n)$ where $\delta_i\in\{0,1\}$. Note that  $1\leq\on{diam}(E_k)\leq 1+\varepsilon_k$. Consider $y=\sum_{k=1}^\infty x_k/2^k$. We claim that $E(y)$ contains an interval $I:=[\min E_1,\max E_1]$.

Note that for any $t\in I$ there is $t_1\in E_1$ with $\vert t-t_1\vert\leq(1+\varepsilon_1)/2$. Since $1\leq\on{diam}(E_2)\leq  1+\varepsilon_2$, there is $t_2\in E_1+\frac12 E_2$ with $\vert t-t_2\vert\leq(1+\varepsilon_2)/2^2$. Hence, there is $\tilde{t}\in E(x_1+x_2/2)$ with $\vert t-\tilde{t}\vert\leq(1+\varepsilon_2)/2^2+\varepsilon_1$.
Since $1\leq\on{diam}(E_k)\leq  1+\varepsilon_k$, then inductively we can find  $t_k\in E_1+\frac12 E_2+\dots+\frac{1}{2^{k-1}}E_k$ with $\vert t-t_k\vert\leq(1+\varepsilon_k)/2^{k}$. Hence, there is $\tilde{t}\in E(x_1+x_2/2+\dots+x_k/2^{k-1})$ with $\vert t-\tilde{t}\vert\leq(1+\varepsilon_k)/2^k+\varepsilon_{k-1}+
\varepsilon_{k-1}/2+\dots+\varepsilon_{k-1}/2^{k-1}\leq(1+\varepsilon_k)/2^k+
2\varepsilon_{k-1}$. Since $E(y)$ is closed and it contains $E(x_1+x_2/2+\dots+x_k/2^{k-1})$, then $t\in E(y)$ and consequently $I\subset E(y)$.
\end{proof}

Immediately we get the following.
\begin{corollary}
The set $\mathcal C$ is not spaceable.
\end{corollary}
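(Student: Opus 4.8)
The plan is to derive this immediately from the preceding theorem by a contradiction argument, so that no new computation is needed. Suppose, towards a contradiction, that $\mathcal C$ is spaceable. Then, by the definition of spaceability, $\mathcal C \cup \{0\}$ contains an infinitely dimensional closed subspace $Y$ of $\ell_1$. Applying the preceding theorem to $Y$, we obtain some $y \in Y$ for which $E(y)$ contains a nondegenerate interval.

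The remaining step is to observe that such a $y$ cannot live inside $\mathcal C \cup \{0\}$. First, $E(0) = \{0\}$ contains no interval, so necessarily $y \neq 0$, and hence $y \in \mathcal C$. But membership in $\mathcal C$ means precisely that $E(y)$ is homeomorphic to the Cantor set, which is totally disconnected and therefore contains no interval. This contradicts the conclusion of the theorem, and the contradiction establishes that $\mathcal C$ is not spaceable.

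I do not anticipate any genuine obstacle here: the entire analytic content of the non-spaceability has already been absorbed into the preceding theorem (constructing, inside an arbitrary infinitely dimensional closed subspace of $\ell_1$, a vector whose achievement set has nonempty interior), and the corollary is just the short bookkeeping step that combines this with the fact that sequences in $\mathcal C$ have totally disconnected achievement sets. The only point worth stating explicitly in the write-up is the disposal of the case $y=0$.
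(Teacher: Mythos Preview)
Your argument is correct and is exactly the immediate deduction the paper intends: the preceding theorem produces, in any infinitely dimensional closed subspace $Y\subset\ell_1$, a vector $y$ with $E(y)$ containing an interval, and such $y$ can lie neither in $\mathcal C$ (totally disconnected $E(y)$) nor be $0$. The paper itself does not write out a proof beyond ``Immediately we get the following,'' so there is nothing further to compare.
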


We end the paper with the list of open questions on the set $\mathcal{MC}$.
\begin{problem}
\begin{itemize}
\item[(i)] Is $\mathcal{MC}$ $\mathfrak{c}$-algebrable?
\item[(ii)] Is $\mathcal{MC}$ an $\mathcal F_\sigma$ subset of $\ell_1$?
\item[(iii)] Is $\mathcal{MC}$ spaceable?

\end{itemize}
\end{problem}

{\bf Acknowledgment.}
The second and the third authors have been supported by the Polish Ministry of Science and Higher Education Grant No.  N N201 414939 (2010-2013). We want to thank F. Prus-Wi\'sniowski who has informed us about the trichotomy of Guthrie and Nymann, and other references on subsums of series.

\end{document}